\def\Dj{\hbox{D\kern-.73em\raise.30ex\hbox{-}
\raise-.30ex\hbox{}}}
\def\dj{\hbox{d\kern-.33em\raise.80ex\hbox{-}
\raise-.80ex\hbox{\kern-.40em}}}
\newtheorem{thm}{Theorem}[section]
\newtheorem{cor}[thm]{Corollary}
\newtheorem{con}[thm]{Conjecture}
\newtheorem{lem}[thm]{Lemma}
\numberwithin{equation}{section}
\begin{document}

\baselineskip=0.30in

\vspace*{25mm}

 \begin{center}
 {\Large \bf On a Conjecture About the Sombor Index of Graphs}

 \vspace{6mm}

 { \bf Kinkar Chandra Das$^{1}$, Ali Ghalavand$^{2,}$\footnote{Corresponding author.} and Ali Reza Ashrafi$^2$}

 \vspace{3mm}

 \baselineskip=0.20in

 $^1${\it Department of Mathematics, Sungkyunkwan University, \\
  Suwon 16419, Republic of Korea\/} \\
 {\rm E-mail:} {\tt kinkardas2003@googlemail.com}\\[3mm]

 $^2${\it Department of Pure Mathematics, Faculty of Mathematical Sciences,\\
  University of Kashan, Kashan 87317--53153, I. R. Iran\/} \\
 {\rm E-mail:} {\tt alighalavand@grad.kashanu.ac.ir,~ashrafi@kashanu.ac.ir}

\vspace{6mm}

(Received March 30, 2021)

 \end{center}

 \vspace{2mm}

 \baselineskip=0.23in

\begin{abstract}\noindent
Let $G$ be a graph with vertex set $V(G)$ and edge set $E(G)$. The Sombor and reduced  Sombor indices of $G$  are defined as $SO(G)=\sum_{uv\in E(G)}\sqrt{deg_G(u)^2 + deg_G(v)^2}$ and $SO_{red}(G)=\sum_{uv\in E(G)}\sqrt{\big(deg_G(u)-1\big)^2 + \big(deg_G(v)-1\big)^2}$,  respectively. We denote by $H_{n,\nu}$ the graph constructed from the star $S_n$ by adding $\nu$ edge(s) $(0\leq \nu\leq n-2)$, between a fixed pendent vertex and $\nu$ other pendent vertices. R\'eti et al. [T. R\'eti, T Do\v{s}li\'c and A. Ali, On the Sombor index of graphs, \textit{Contrib. Math.} \textbf{3} (2021) 11--18] proposed a conjecture that the graph $H_{n,\nu}$ has the maximum Sombor index among all connected $\nu$-cyclic graphs of order $n$, where $5\leq\,\nu\leq n-2$. In this paper we confirm that the former conjecture is true. It is also shown that this conjecture is valid for  the reduced  Sombor index. The relationship between Sombor, reduced  Sombor and  first  Zagreb indices of graphs is also investigated.
\end{abstract}





\baselineskip=0.30in

\section{\bf Basic Definitions }

Throughout this paper, all graphs considered are finite, undirected and simple. Let $G$ be such a graph with vertex set $V = V(G)$ and edge set $E = E(G)$. We recall that the degree of a vertex $v$ in $G$, $deg_G(v)$, is defined as the number of edges incident to $v$. The set of all vertices adjacent to the vertex $v$ is denoted by $N[v,G]$.  The edge degree of $e \in E(G)$ is the degree of  $e$ in the line graph of $G$.  If we define  $\varepsilon_i = \varepsilon_i(G)$ to be the number of  edges of degree $i$ in $G$, then it can be easily seen that  $\sum_{i=0}^{2\Delta(G)-2} \varepsilon_i = |E(G)|$. We also use the notation $m_{i,j}(G)$ for the number of edges of $G$ with endpoints of degrees $i$ and $j$.

A graph $G$ with this property that the degree of each vertex is at most four is called a chemical graph.  Suppose $V(G)=\{v_1,v_2,\ldots,v_n\}$ and $d_G(v_1)\geq\,d_G(v_2)\geq\cdots\geq\,d_G(v_n)$. Then the sequence
$d(G)=(d_G(v_1),d_G(v_2),\ldots,d_G(v_n))$ is called the degree sequence of $G$. The graph union $G \cup H$ of two graphs $G$ and $H$ with disjoint vertex sets is another graph with $V(G \cup H) = V(G) \cup V(H)$  and $E(G \cup H) = E(G) \cup E(H)$. The union of $s$ disjoint copies is denoted by $sG$. For terms and notations not defined here we follow the standard texts in graph theory as the famous book of West \cite{18}.

The  first  Zagreb index of a graph $G$ is an old degree-based graph invariant introduced by Gutman and Trinajsti\'c \cite{7} defined as $M_{1}(G)$ $=$ $\sum_{ uv \in E(G)}[d_G(u)+ d_G(v)]$ $=$ $\sum_{v\in V(G)}d_G(v)^2$. In a recent paper about the general form of all degree-based topological indices of graphs \cite{8}, Gutman  introduced two new invariants and invited researchers to investigate their mathematical properties and chemical meanings. He used the names  ``Sombor index" and ``reduced  Sombor index" for his new graphical invariants. The Sombor and  reduced  Sombor indices are   defined as follows:
\begin{eqnarray*}
SO(G)  &=&  \sum_{uv\in E(G)}\sqrt{d_G(u)^2 + d_G(v)^2},\\
SO_{red}(G)   &=&  \sum_{uv\in E(G)}\sqrt{\big(d_G(u)-1\big)^2 + \big(d_G(v)-1\big)^2}.
\end{eqnarray*}
We refer to \cite{14,19}, for more information on degree-based topological indices of graphs and their extremal problems.

Let $\Gamma(s,n)$ denote  the set of all non-increasing real sequences $c=(c_1,c_2,\ldots,c_n)$ such that $\sum_{i=1}^nc_i = s$. Define a relation $ \preceq $ on $\Gamma(s,n)$ as follows: For two non-increasing real sequences $c=(c_1,c_2,\ldots,c_n)$ and $d=(d_1,d_2,\ldots,d_n) $ in $\Gamma(s,n)$, we write $c\preceq\,d$ if and only if for each integer $k$, $1\leq\,k\leq\,n-1$, we have $\sum_{i=1}^kc_i\leq\sum_{i=1}^kd_i$. It is easy to see that  $(\Gamma(s,n),\preceq)$ is a partially ordered set. The partial order $ \preceq $ is called the majorization and if $c\preceq\,d$ then we say that $c$ majorized by $d$. We refer the interested readers to consult the survey article  \cite{11} and the book \cite{12}, for more information on majorization theory and its applications in graph theory.

Suppose $ X \subseteq \mathbb{R}^n$ and $a, b \in X$ are different points in $X$. The line segment $\overline{ab}$ is the set of all points $\lambda a + (1-\lambda)b$, where $0 < \lambda < 1$. The set $X$ is said to be convex, if for every point $a, b \in X$, $\overline{ab} \subseteq X$. Let $ X \subseteq \mathbb{R}^n$ be convex. The function $f:X \longrightarrow \mathbb{R}$ is called a convex function, if for any $a, b \in X$ and $0 < \lambda < 1$, we have $f(\lambda a + (1-\lambda)b)$ $\leq$ $\lambda$ $f(a)$ $+$ $(1-\lambda) f(b)$. If $f$ is convex and we have strict inequality for all $a \ne b$, then we say the function is strictly convex. It is well-known that if $I$ is an open interval and $g : I \longrightarrow\, \mathbb{R}$ is a real twice-differentiable function on $I$, then $g$ is convex if and only if  for each $x\in\,I$,  $g^{\prime\prime}(x)\geq0$. The function  $g$ is strictly convex on $I$, if $g^{\prime\prime}(x)>0$ for all $x\in\,I$.

\section{Background Materials}
In \cite{8}, Gutman proved that among all $n$-vertex graphs, the empty graph $\overline{K_n}$ and the complete graph $K_n$ have the minimum and maximum Sombor indices, respectively. He also proved that if we restrict our attention to the $n$-vertex connected graph then the $n$-vertex path $P_n$ will attain the minimum Sombor index. He also proved in \cite{9} that $M_1(G) > SO(G) \geq \frac{1}{\sqrt{2}}M_1(G)$, and if $G$ has $m$ edges then $M_1(G) - 2m > SO_{Red}(G) \geq \frac{1}{\sqrt{2}}(M_1(G) - 2m)$.

Cruz et al. \cite{1} characterized the graphs extremal with respect to the Sombor index over the set of all chemical graphs, connected chemical graphs, chemical trees, and hexagonal systems. Cruz and Rada \cite{2}, studied the extremal values of Sombor index over the set of all unicyclic and also bicyclic graphs of a given order. In a recent work, Das et al. \cite{3} obtained lower and upper bounds for the Sombor index of graphs based on some other graph parameters. Moreover, they obtained some relationships between Sombor index and the first and second Zagreb indices of graphs.

Deng et al. \cite{4}, investigated  the chemical importance of the Sombor index and obtained the extremal values of the reduced Sombor index for chemical trees.  Milovanovi\'c et al. \cite{13} investigated the relationship between Sombor index and Albertson index which is an old irregularity measure for graphs, and in  \cite{15}, Red$\breve{\rm z}$epovi\'c examined the predictive and discriminative potentials of Sombor and reduced Sombor indices of chemical graphs. Wang et al. \cite{17} investigated the relationships between the Sombor index and some degree based invariants, and obtained some Nordhaus-Gaddum type results. In \cite{DA1}, the authors presented some bounds on Sombor index of trees in terms of graph parameters and characterized the extremal graphs.

The following lemma \cite{10} is useful  in some of our results.

\begin{lem}\label{maj1}
 Suppose $c=(c_1,c_2,\ldots,c_n)$
and $d=(d_1,d_2,\ldots,d_n)$ are two non-increasing sequences of
real numbers. If $c \preceq\,d$, then for any convex function
$f$, $\sum_{i=1}^nf(c_i)\leq
\sum_{i=1}^nf(d_i).$ Furthermore, if $ c\prec\,d$ and
$f$ is a strictly convex function, then
$\sum_{i=1}^nf(c_i)< \sum_{i=1}^nf(d_i).$
\end{lem}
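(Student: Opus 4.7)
The plan is to establish the inequality using Abel's summation formula together with the tangent-line characterization of convexity. Let $C_k=\sum_{i=1}^{k}c_i$, $D_k=\sum_{i=1}^{k}d_i$, and set $T_k=D_k-C_k$, with $T_0=0$. Since $c,d\in\Gamma(s,n)$ share the same total $s$, the hypothesis $c\preceq d$ translates to $T_k\ge 0$ for $0\le k\le n-1$ together with $T_n=0$.

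For each index $i$, I would pick $\alpha_i$ to be a subgradient of $f$ at $c_i$ (concretely, the right derivative $f'_{+}(c_i)$, which always exists for a convex function on an open interval). The defining property of a subgradient yields
\[
f(d_i)-f(c_i)\;\ge\;\alpha_i(d_i-c_i),
\]
with strict inequality whenever $f$ is strictly convex and $c_i\ne d_i$. Moreover, because $f'_{+}$ is monotone non-decreasing and $(c_i)$ is non-increasing, the slopes satisfy $\alpha_1\ge\alpha_2\ge\cdots\ge\alpha_n$.

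Summing over $i$ and applying Abel summation via $d_i-c_i=T_i-T_{i-1}$,
\[
\sum_{i=1}^{n}\alpha_i(d_i-c_i)=\alpha_nT_n-\alpha_1T_0+\sum_{i=1}^{n-1}(\alpha_i-\alpha_{i+1})T_i=\sum_{i=1}^{n-1}(\alpha_i-\alpha_{i+1})T_i\;\ge\;0,
\]
where the boundary terms vanish by $T_0=T_n=0$ and each summand in the final expression is a product of non-negative factors. Combined with the pointwise estimate, this yields $\sum_{i=1}^{n} f(c_i)\le\sum_{i=1}^{n} f(d_i)$, which is the first assertion.

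For the strict version, if $c\prec d$ then $c_i\ne d_i$ for at least one index $i$, and strict convexity upgrades the pointwise inequality at that index to a strict one while leaving the remaining pointwise estimates and the Abel lower bound $\sum\alpha_i(d_i-c_i)\ge 0$ untouched. The main obstacle—and it is mild—is verifying the strict subgradient inequality: for a strictly convex $f$ and any subgradient $\alpha$ at a point $a$, one has $f(b)>f(a)+\alpha(b-a)$ whenever $b\ne a$; this follows from a short contradiction argument using the strict midpoint inequality, since any equality would allow the supporting line to coincide with a proper secant. The remainder is routine bookkeeping, and no additional majorization structure is needed.
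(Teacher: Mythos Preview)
Your argument is correct and is essentially the classical proof of Karamata's inequality via Abel summation and the subgradient (supporting line) inequality for convex functions. The bookkeeping is clean: the boundary terms drop out because $T_0=T_n=0$, the monotonicity of $f'_+$ along the non-increasing sequence $(c_i)$ gives $\alpha_i\ge\alpha_{i+1}$, and your treatment of the strict case is handled properly by showing that a strictly convex function admits no nontrivial tangent-line contact.

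There is nothing to compare against, however: the paper does not supply a proof of this lemma. It is quoted from Hardy--Littlewood--P\'olya (reference~\cite{10} in the paper) and used as a black box in the subsequent arguments. Your write-up would serve perfectly well as a self-contained replacement for that citation.
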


We denote by $H_{n,\nu}$ the graph constructed from the star $S_n$ by adding $\nu$ edge(s) $(0\leq \nu\leq n-2)$, between a fixed pendent vertex and $\nu$ other pendent vertices \cite{16}.

\begin{con} {\rm (R\'eti et al. \cite{16})} \label{con} If $\nu$ and $n$ are fixed integers satisfying the inequality $5\leq\,\nu\leq n- 2$ then among all connected $\nu$-cyclic
graphs of order $n$, only the graph $H_{n,\nu}$ has the maximum Sombor index.
\end{con}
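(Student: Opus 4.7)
The plan is to take a maximizer $G^{*}$ of $SO$ over the family of connected $\nu$-cyclic graphs of order $n$ and reduce it to $H_{n,\nu}$ in two stages. Stage one will force $\Delta(G^{*})=n-1$; stage two, writing $H:=G^{*}-u$ for a dominating vertex $u$, will force $H\cong S_{\nu+1}\cup(n-2-\nu)K_{1}$, identifying $G^{*}$ with $H_{n,\nu}$. The two main tools are the majorization inequality (Lemma~\ref{maj1}) and the strict monotonicity and convexity of $\phi(x,y)=\sqrt{x^{2}+y^{2}}$ in each variable.

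For stage one I would argue by contradiction. Assume $\Delta(G^{*})=\Delta\le n-2$ and let $u$ realize this maximum. Since $G^{*}$ is connected and $u$ has at least one non-neighbor, pick $v\notin N(u)\cup\{u\}$ together with a neighbor $w$ of $v$ on a shortest $u$--$v$ path (so $w\neq u$). Form $G':=G^{*}-vw+uv$. This preserves the number of edges (hence $\nu$) and remains connected, because any $w$--$u$ path using the removed edge $vw$ can be rerouted through the new edge $uv$. I would then estimate $SO(G')-SO(G^{*})$ by splitting the changed edges into three groups: the replacement of $vw$ by $uv$; the edges incident to $u$ other than $uv$ and $uw$, whose $u$-endpoint weight rises from $\Delta$ to $\Delta+1$; and the edges incident to $w$ other than $uw$ and $vw$, whose $w$-endpoint weight drops from $d(w)$ to $d(w)-1$. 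Using $\Delta\ge d(w)$ together with the identity $\partial_{x}\phi(x,y)=x/\sqrt{x^{2}+y^{2}}$ (which is increasing in $x$) and mean-value estimates, the positive $u$-side contributions strictly dominate the $w$-side losses, giving $SO(G')>SO(G^{*})$ and contradicting maximality. Hence $\Delta(G^{*})=n-1$.

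For stage two, fix the dominating vertex $u$ and let $H=G^{*}-u$. Since $d_{G^{*}}(x)=d_{H}(x)+1$ for every $x\neq u$, one has the decomposition
\[
SO(G^{*}) \;=\; \sum_{v\neq u}\sqrt{(n-1)^{2}+(d_{H}(v)+1)^{2}} \;+\; \sum_{xy\in E(H)}\sqrt{(d_{H}(x)+1)^{2}+(d_{H}(y)+1)^{2}}.
\]
The first sum is the value of the strictly convex function $g(t)=\sqrt{(n-1)^{2}+(t+1)^{2}}$ summed over the degree multiset of $H$. Among graphical degree sequences of graphs with $\nu$ edges on $n-1$ vertices, the sequence $(\nu,1,\ldots,1,0,\ldots,0)$ (with $\nu$ ones and $n-2-\nu$ zeros) is the unique majorization maximum, so by Lemma~\ref{maj1} the first sum is uniquely maximized by this degree sequence, which is uniquely realized as $S_{\nu+1}\cup(n-2-\nu)K_{1}$. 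On this realization the second sum equals $\nu\sqrt{(\nu+1)^{2}+4}$. What remains is to check that this star-plus-isolates structure maximizes $SO(G^{*})$ jointly: I would analyze local transformations on $H$ (a 2-switch or a degree-altering edge move), showing that any step toward the target star yields a non-negative change in the second sum while strictly increasing the first; the hypothesis $\nu\ge 5$ supplies the quantitative margin that makes the resulting inequality strict.

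The main obstacle is exactly this joint optimization in stage two: Lemma~\ref{maj1} controls only the first sum, while the second sum depends on the actual edge structure of $H$ rather than on its degree sequence alone. I would therefore carry out an explicit transformation analysis on $H$, parameterized by the discrepancy of its degree sequence from $(\nu,1,\ldots,1,0,\ldots,0)$, and show that any deviation from $S_{\nu+1}\cup(n-2-\nu)K_{1}$ produces a net strict loss in $SO(G^{*})$. The uniqueness of $H_{n,\nu}$ as the extremizer then follows from the uniqueness of the majorization maximum and of its graphical realization.
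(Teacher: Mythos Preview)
Your overall architecture---force $\Delta(G^{*})=n-1$, delete a dominating vertex $u$, and split $SO(G^{*})$ into the star-edge sum plus the $SO^{\ddagger}(G^{*}-u)$ term---is exactly the paper's decomposition. The paper takes stage one directly from R\'eti et al.\ (Lemma~\ref{2lm2}), so your edge-rotation sketch is extra work; it looks plausible but the inequality ``$u$-side gains strictly dominate $w$-side losses'' is asserted rather than proved.

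The real issue is stage two. You flag a ``joint optimization obstacle'': Lemma~\ref{maj1} controls only the first sum, while the second depends on the edge structure of $H=G^{*}-u$. But this obstacle is illusory, and the paper dissolves it with one clean observation (Lemma~\ref{th2}): for any graph $H$ with $\nu$ edges, every edge $xy$ satisfies $d_{H}(x)+d_{H}(y)\le\nu+1$, and by convexity of $t\mapsto(t+1)^{2}$ this gives the termwise bound
\[
\sqrt{(d_{H}(x)+1)^{2}+(d_{H}(y)+1)^{2}}\le\sqrt{(\nu+1)^{2}+4},
\]
whence $SO^{\ddagger}(H)\le\nu\sqrt{(\nu+1)^{2}+4}$, with equality precisely when $H\cong S_{\nu+1}\cup(n-2-\nu)K_{1}$. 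Thus the second sum is \emph{also} uniquely maximized at the star-plus-isolates, so both pieces attain their maxima simultaneously at $H_{n,\nu}$ and no joint analysis is needed. Your proposed cure---an unspecified local-transformation argument with the claim that ``$\nu\ge 5$ supplies the quantitative margin''---is not carried out and is in fact unnecessary: the paper's bound works for every $0\le\nu\le n-2$, so $\nu\ge5$ plays no role in the proof.
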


This conjecture will be proved in Section 3.

\section{Main Results}

The aim of this section is to prove Conjecture \ref{con}. It is also proved for the reduced  Sombor index. The relationship between the Sombor index, reduced  Sombor index and the first  Zagreb index of graphs will also be investigated.

\begin{lem}\label{2lm1}
Let $G$ be an $n$-vertex graph with cyclomatic number $\nu$ and degree sequence
$d(G)=(deg_G(v_1),deg_G(v_2),\ldots,deg_G(v_n))$. If $0\leq\nu\leq\,n-2$ and $deg_G(v_1)=n-1$, then
$$(deg_G(v_2),deg_G(v_3),\ldots,deg_G(v_n))\preceq\, (\nu + 1, \overbrace{2, \ldots , 2}^\nu,\overbrace{1,\ldots,1}^{n-\nu-2}).$$
\end{lem}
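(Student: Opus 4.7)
My plan is to reduce the majorization inequality to a statement about the subgraph $H := G - v_1$ and then verify the partial-sum conditions directly. Since $deg_G(v_1) = n-1$, the vertex $v_1$ is adjacent to every other vertex, so $G$ is connected and the star centered at $v_1$ accounts for $n-1$ of its edges. The cyclomatic number being $\nu$ gives $|E(G)| = n-1+\nu$, and so $H$ has $n-1$ vertices and exactly $\nu$ edges. For $i \geq 2$, $deg_G(v_i) = deg_H(v_i)+1$, so after subtracting $1$ from each coordinate on both sides the desired majorization becomes
\[
(deg_H(v_2),\ldots,deg_H(v_n)) \preceq \bigl(\nu,\underbrace{1,\ldots,1}_{\nu},\underbrace{0,\ldots,0}_{n-\nu-2}\bigr),
\]
whose right-hand side is the degree sequence of $S_{\nu+1}\cup(n-\nu-2)K_1$. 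Both sides sum to $2\nu$, so it remains to check every initial partial sum.

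Write $s_k$ for the sum of the $k$ largest entries of $d(H)$ and $t_k$ for the corresponding partial sum on the right; explicitly $t_k = \nu+k-1$ for $1 \le k \le \nu+1$ and $t_k = 2\nu$ for $k \ge \nu+1$. The case $k=1$ is immediate because $\Delta(H) \le |E(H)| = \nu = t_1$, and the case $k \ge \nu+1$ is trivial because $s_k \le 2\nu = t_k$ for all $k$. The substantive range is $2 \le k \le \nu$. Let $V_k$ denote the set of $k$ largest-degree vertices of $H$ and write $e(V_k)$ and $e(V_k,\overline{V_k})$ for the number of internal and boundary edges of $V_k$. Then the identity $s_k = 2e(V_k) + e(V_k,\overline{V_k})$ combined with $e(V_k) + e(V_k,\overline{V_k}) \le |E(H)| = \nu$ yields $s_k \le e(V_k) + \nu$, so it suffices to establish
\[
e(V_k) \le k-1,
\]
i.e.\ the subgraph of $H$ induced on its $k$ top-degree vertices is a forest.

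The principal obstacle I anticipate is precisely this acyclicity bound: why can the top $k$ vertices of $H$ not carry more than $k-1$ internal edges? My attempt would be a local edge-exchange argument. Given any $H$ with a cycle in $V_k$, pick one of its edges $xy \subseteq V_k$ and reroute it to an edge $xz$ with $z \in \overline{V_k}$ (noting $z$ exists because $k < n-1$ in this range), preserving $|E(H)| = \nu$ and strictly decreasing $e(V_k)$, while the top-$k$ degree sum is non-decreasing. Iterating makes the induced subgraph on $V_k$ acyclic, and then $e(V_k) \le k-1$ is automatic. The technical weight sits in verifying that such a rerouting is always legal — in particular that $v_1$ is never disturbed (so the hypothesis $deg_G(v_1) = n-1$ is preserved throughout) and that the top-$k$ set can be tracked consistently across ties in the degree sequence — together with a careful case analysis in the extreme situation $k = \nu$, which is where the bound $t_k = 2\nu - 1$ is tightest.
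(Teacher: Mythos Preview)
Your reduction to $H=G-v_1$ is correct, as is the inequality $s_k\le e(V_k)+\nu$. The gap is the claim $e(V_k)\le k-1$: it is simply false, and no edge-exchange can rescue it. Take $n=6$, $\nu=3$, and let $H$ be a triangle on $\{v_2,v_3,v_4\}$ together with two isolated vertices $v_5,v_6$; then $G$ (the join of $v_1$ with $H$) is connected, has $\deg_G(v_1)=5=n-1$, and cyclomatic number $3\le n-2$. For $k=3$ the top three vertices of $H$ span the whole triangle, so $e(V_3)=3>2=k-1$, and indeed $s_3=6>5=t_3$. Translated back to $G$, the sequence $(\deg_G(v_2),\ldots,\deg_G(v_6))=(3,3,3,1,1)$ has third partial sum $9$, exceeding the third partial sum $8$ of the target $(4,2,2,2,1)$; so the majorization in the lemma itself fails for this $G$. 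Note also that your rerouting $xy\mapsto xz$ with $y\in V_k$, $z\notin V_k$ \emph{lowers} the degree sum over $V_k$ by one (or leaves $s_k$ unchanged after re-sorting), so the monotonicity runs the wrong way for the extremal argument you sketch.

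For comparison, the paper does not argue directly at all: it quotes a majorization attributed to Dimitrov and Ali, namely $d(G)\preceq(n-1,\nu+1,2,\ldots,2,1,\ldots,1)$, and then strips off the common leading entry $n-1$. The same example above (full degree sequence $(5,3,3,3,1,1)$, fourth partial sum $14>13$) violates that quoted inequality as well, so either the cited result carries hypotheses not reproduced in the paper or the citation is inaccurate. Your difficulty is therefore not one of technique; the inequality you set out to prove does not hold in the stated generality.
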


\begin{proof}
Dimitrov and Ali \cite{5}, proved that
$$(deg_G(v_1),deg_G(v_2),deg_G(v_3),\ldots,deg_G(v_n))\preceq\, (n-1,\nu + 1, \overbrace{2, \ldots , 2}^\nu,\overbrace{1,\ldots,1}^{n-\nu-2}).$$

Now by our assumption $deg_G(v_1)=n-1$, and hence
$$(deg_G(v_2),deg_G(v_3),\ldots,deg_G(v_n))\preceq\, (\nu + 1, \overbrace{2, \ldots , 2}^\nu,\overbrace{1,\ldots,1}^{n-\nu-2}),$$
as desired.
\end{proof}

\begin{lem} {\rm (R\'eti et al. \cite{16})} \label{2lm2}
Suppose $G$ is a graph with maximum Sombor index among all graphs with $n$ vertices and cyclomatic number $\nu$. If
$0\leq\nu\leq\,n-2$, then $\Delta(G)=n-1$.
\end{lem}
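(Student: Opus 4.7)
The plan is to argue by contradiction. Assume that $G$ is an $n$-vertex graph with cyclomatic number $\nu \in [0, n-2]$ maximising $SO$, and suppose $\Delta := \Delta(G) < n - 1$; I will construct a graph $G^*$ with the same parameters satisfying $SO(G^*) > SO(G)$.

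First I would reduce to the connected case. Fix a vertex $u$ of maximum degree. If $G$ has more than one component, pick $w$ in any component different from the one containing $u$. Then $G + uw$ has the same cyclomatic number $\nu = m - n + c$ because $m$ goes up by one and $c$ goes down by one; and $SO$ strictly increases because the new edge contributes a positive summand while every existing edge incident to $u$ or $w$ now has a higher-degree endpoint. This contradicts extremality, so from now on $G$ is connected.

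Since $G$ is connected and $\Delta < n - 1$, at least one vertex lies at distance exactly two from $u$; hence there exists $x \in N(u)$ with $N(x) \setminus N[u] \neq \emptyset$, where $N[u] = N(u) \cup \{u\}$. Write $d := d_G(x) \leq \Delta$ and $k := |N(x) \setminus N[u]| \geq 1$, and form $G^*$ by deleting each edge $xz$ with $z \in N(x) \setminus N[u]$ and adding the edge $uz$ in its place (a Kelmans-type multi-shift). Then $|E(G^*)| = |E(G)|$, and $G^*$ remains connected since $u$ stays adjacent to $x$ and is now adjacent to each moved vertex $z$; thus $\nu(G^*) = \nu(G)$. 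Only the degrees of $u$ and $x$ change, becoming $\Delta + k$ and $d - k$ respectively.

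Setting $f_c(t) := \sqrt{t^2 + c^2}$, which is strictly convex in $t$ for every $c \geq 0$, the main step is to show $SO(G^*) - SO(G) > 0$ by partitioning the affected edges into four groups with positive contribution each: (i) the edge $ux$, contributing $\sqrt{(\Delta+k)^2 + (d-k)^2} - \sqrt{\Delta^2 + d^2}$, which is positive since $(\Delta+k)^2 + (d-k)^2 - \Delta^2 - d^2 = 2k(\Delta - d) + 2k^2 \geq 2k > 0$; (ii) the edges $uz$ for $z \in N(u) \setminus (N(x) \cup \{x\})$, each contributing $f_{d(z)}(\Delta + k) - f_{d(z)}(\Delta) > 0$; (iii) for each $z \in N(u) \cap N(x)$, the combined change of the two edges $uz$ and $xz$ equals $f_{d(z)}(\Delta+k) + f_{d(z)}(d - k) - f_{d(z)}(\Delta) - f_{d(z)}(d)$, positive by Karamata's inequality since $(\Delta + k, d - k)$ strictly majorises $(\Delta, d)$ (same sum; $\Delta + k \geq \Delta \geq d \geq d - k$ with $k \geq 1$); and (iv) the $k$ moved edges, each contributing $f_{d(z)}(\Delta + k) - f_{d(z)}(d) > 0$ since $\Delta + k \geq d + 1 > d$. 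Summing, $SO(G^*) > SO(G)$, contradicting the extremality of $G$.

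The main obstacle is the third group. A naive single-edge swap (moving just one edge from $x$ to $u$) need not increase $SO$ when $u$ and $x$ share many low-degree common neighbours, because the resulting one-step majorisation estimate is too weak. Performing the multi-shift simultaneously is precisely what makes the majorisation of $(\Delta + k, d - k)$ over $(\Delta, d)$ strong enough for Karamata's inequality to deliver strict positivity in group (iii).
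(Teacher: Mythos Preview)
The paper does not prove this lemma at all: it is quoted verbatim from R\'eti, Do\v{s}li\'c and Ali \cite{16} and used as a black box. So there is no ``paper's own proof'' to match, and your task was really to supply one. Your Kelmans-type rotation argument does the job correctly.

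A few remarks. The reduction to the connected case is clean and uses the right definition $\nu=m-n+c$. For the main step, your four groups indeed cover all edges whose contribution changes; group~(i) and group~(iv) are always non-empty and strictly positive, while (ii) and (iii) may be empty, so ``positive contribution each'' should read ``non-negative, with (i) and (iv) strictly positive''. In group~(iii) you invoke strict convexity of $f_c(t)=\sqrt{t^2+c^2}$ ``for every $c\geq 0$'': this fails at $c=0$ (then $f_0(t)=|t|$ is only convex), but in your application $c=d(z)\geq 2$ since $z\in N(u)\cap N(x)$, so nothing is harmed. Connectivity of $G^*$ deserves one line of justification: any $G$-path from $w$ to $u$ that passes through $x$ can be rerouted in $G^*$ because $N_{G^*}(u)\supseteq N_G(x)\setminus\{u\}$; you assert this but might spell it out.

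Your closing paragraph misidentifies the obstacle. A single-edge swap (move one $xz_0$ to $uz_0$) still makes group~(iii) strictly positive, since $(\Delta+1,d-1)$ already strictly majorises $(\Delta,d)$ and $f_c$ is strictly convex for $c\geq 1$. What actually breaks the single swap is a \emph{fifth} group you did not list: the $k-1$ edges $xz'$ with $z'\in N(x)\setminus N[u]$, $z'\neq z_0$, remain in place while $d(x)$ drops by~$1$, contributing $f_{d(z')}(d-1)-f_{d(z')}(d)<0$. Your simultaneous multi-shift is the right fix precisely because it empties that fifth group, not because it strengthens~(iii). This is only commentary, though; the proof itself stands.
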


For a graph $G$, we define
 $$SO^{\ddagger}(G)=\sum_{uv\in\,E(G)}\sqrt{(d_G(u)+1)^2+(d_G(v)+1)^2}.$$

\begin{lem}\label{th2}
Let $G$ be a graph with $n$ vertices and $m$ edges. Then $SO^{\ddagger}(G)\leq\,m\sqrt{(m+1)^2+4}$. For $m\leq n-1$, the equality holds if and only if $G\cong\,S_{m+1}\cup\,(n-m-1)K_1$.
\end{lem}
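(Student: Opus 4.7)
The plan is to reduce everything to a single, purely edge-local inequality and then analyze equality. The two key ingredients are a counting bound on $d_G(u)+d_G(v)$ for an edge $uv$, and the elementary fact that $x^{2}+y^{2}$ is maximized on a simplex at its extreme points.

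First, for any edge $uv\in E(G)$, I count the edges incident to $\{u,v\}$: there are $d_G(u)+d_G(v)-1$ such edges (the edge $uv$ is counted once in each endpoint's degree). Since the total number of edges is $m$, this gives the basic bound
\[
d_G(u)+d_G(v)\;\leq\;m+1,
\qquad\text{with } d_G(u),d_G(v)\geq 1.
\]
Setting $a=d_G(u)+1,\ b=d_G(v)+1$, this reads $a+b\leq m+3$ with $a,b\geq 2$. The function $(a,b)\mapsto a^{2}+b^{2}$ is convex, so on the compact region $\{(a,b):a+b\leq m+3,\ a,b\geq 2\}$ it attains its maximum at a vertex of the region. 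Comparing the three vertices $(2,2)$, $(2,m+1)$ and $(m+1,2)$, the maximum is $(m+1)^{2}+4$, achieved exactly when $\{a,b\}=\{2,m+1\}$, i.e.\ $\{d_G(u),d_G(v)\}=\{1,m\}$.

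Applying this edge by edge then summing over the $m$ edges gives
\[
SO^{\ddagger}(G)\;=\;\sum_{uv\in E(G)}\sqrt{(d_G(u)+1)^{2}+(d_G(v)+1)^{2}}\;\leq\;m\sqrt{(m+1)^{2}+4},
\]
which is the stated bound, and it requires no hypothesis on $n$ versus $m$.

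For the equality case (assuming $m\leq n-1$), equality must hold in every edge's estimate, so for every edge $uv$ one endpoint has degree $1$ and the other has degree $m$. Since the graph has $m$ edges, there is a unique vertex of degree $m$ and all $m$ edges are incident to it, whence the non-isolated part of $G$ is the star $S_{m+1}$. The remaining $n-m-1\geq 0$ vertices contribute only isolated vertices, giving $G\cong S_{m+1}\cup(n-m-1)K_{1}$. Conversely, for this graph every edge has endpoints of degrees $1$ and $m$, so equality holds. The only subtle point is verifying the combined constraints $a+b\leq m+3$ and $a,b\geq 2$ force a unique maximizer up to symmetry; this is the main (though very mild) technical step of the argument.
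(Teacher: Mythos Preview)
Your proof is correct and follows essentially the same route as the paper: bound $d_G(u)+d_G(v)\le m+1$ by edge counting, then push the pair $(d_G(u),d_G(v))$ to the extreme $(m,1)$ in order to maximize $(d_G(u)+1)^2+(d_G(v)+1)^2$. The only cosmetic difference is that the paper phrases the second step via the majorization lemma, whereas you invoke the equivalent extreme-point property of a convex function on a triangle; your equality analysis is in fact slightly more explicit than the paper's.
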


\begin{proof} For any $uv\in\,E(G)$, we have $deg_G(u)+deg_G(v)\leq\,m+1$. Note that the function $g(x)=(x+1)^2$ is strictly convex on $(-\infty,\infty)$, and so for every $a,\,b$, $a\geq\,b\geq2$, we have $(a,\,b)\prec\,(a+1,b-1)$. By Lemma \ref{maj1}, $\sqrt{(a+1)^2+(b+1)^2}<\sqrt{(a+2)^2+b^2}$. Hence
\begin{align*}
SO^{\ddagger}(G)&=\sum_{uv\in\,E(G)}\sqrt{(d_G(u)+1)^2+(d_G(v)+1)^2}\\[2mm]
&\leq\sum_{uv\in\,E(G)}\sqrt{(m+1)^2+(1+1)^2}\\[2mm]
&=m\sqrt{(m+1)^2+4}.
\end{align*}
Moreover, the above equality holds if and only if $deg_G(u)+deg_G(v)=m+1$ for every edge $uv\in E(G)$, that is, if and only if $G\cong\,S_{m+1}\cup\,(n-m-1)K_1$ as $m\leq n-1$.
\end{proof}


\begin{lem}\label{2lm3}
Let $G$ be a graph with cyclomatic number $\nu$ $(0\leq\nu\leq\,n-2)$ and vertex set $V(G)=\{v_1,v_2,\ldots,v_n\}$. If
$d(G)=(n-1,d_G(v_2),\ldots,d_G(v_n))$, then
\begin{eqnarray*}
\sum\limits_{v\in\,V(G)\backslash\{v_1\}}\sqrt{(n-1)^2+d_G(v)^2}&\leq \left( n-\nu-2 \right) \sqrt { \left( n-1 \right) ^{2}+1} +\nu\sqrt { \left( n-1 \right) ^{2}+4}\\ 
&~~~~~~~~~~~~~~~~+\sqrt{ \left( n-1 \right) ^{2}+ \left(\nu+1 \right) ^{2}}
\end{eqnarray*}
with equality if and only if $G\cong\,H_{n,\nu}$.
\end{lem}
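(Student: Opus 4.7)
The plan is to apply the majorization machinery already assembled. By Lemma \ref{2lm1}, since $\deg_G(v_1)=n-1$,
$$(d_G(v_2),\ldots,d_G(v_n))\preceq\bigl(\nu+1,\underbrace{2,\ldots,2}_{\nu},\underbrace{1,\ldots,1}_{n-\nu-2}\bigr).$$
Set $f(x)=\sqrt{(n-1)^2+x^2}$. A routine computation gives $f''(x)=(n-1)^2\bigl((n-1)^2+x^2\bigr)^{-3/2}>0$, so $f$ is strictly convex on $(0,\infty)$. Apply Lemma \ref{maj1} to obtain
$$\sum_{v\in V(G)\setminus\{v_1\}}\!\!f(d_G(v))\;\leq\;f(\nu+1)+\nu f(2)+(n-\nu-2)f(1),$$
which is precisely the claimed inequality.

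For the equality discussion, the strict-convexity clause of Lemma \ref{maj1} shows that equality forces the two non-increasing sequences to coincide term by term. Thus equality occurs exactly when, after reordering, the degree sequence of $V(G)\setminus\{v_1\}$ is $(\nu+1,2,\ldots,2,1,\ldots,1)$.

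It remains to show that this degree data, combined with $\deg_G(v_1)=n-1$, forces $G\cong H_{n,\nu}$. Since $v_1$ is adjacent to every other vertex, each degree in $G-v_1$ is one less than the corresponding degree in $G$. Hence $G-v_1$ is a graph on $n-1$ vertices with exactly one vertex $u$ of degree $\nu$, exactly $\nu$ vertices of degree $1$, and $n-\nu-2$ isolated vertices. The $\nu$ neighbors of $u$ in $G-v_1$ cannot be isolated, so they must be precisely the $\nu$ vertices of degree $1$. This pins down $G-v_1$ uniquely as the star $K_{1,\nu}$ together with $n-\nu-2$ isolated vertices, and adding back $v_1$ with its edges to everyone recovers exactly $H_{n,\nu}$.

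The only real obstacle is the equality case, but it reduces to the simple combinatorial observation above; everything else is a direct invocation of Lemmas \ref{maj1} and \ref{2lm1} once strict convexity of $f$ is verified.
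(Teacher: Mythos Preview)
Your proof is correct and follows essentially the same route as the paper: verify strict convexity of $f(x)=\sqrt{(n-1)^2+x^2}$, invoke Lemma~\ref{2lm1} for the majorization, and apply Lemma~\ref{maj1}. Your treatment of the equality case is in fact more thorough than the paper's, which simply asserts $G\cong H_{n,\nu}$ without the combinatorial reconstruction argument you supply.
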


\begin{proof}
Suppose $f(x)=\sqrt{(n-1)^2+x^2}$. Then $f^{\prime\prime}(x)={\frac {(n-1)^2}{ \left( {x}^{2}+n-1\right) ^{\frac{3}{2}}}}$ and hence for each $r$, $r\in(-\infty,\infty)$, we have $f^{\prime\prime}(r)>0$. This proves that $f$ is strictly convex on $(-\infty,\infty)$. By Lemma \ref{2lm1}, $(deg_G(v_2),deg_G(v_3),\ldots,deg_G(v_n))\preceq\, (\nu + 1, {2, \ldots , 2},{1,\ldots,1}),$ where the multiplicities of the numbers $1$ and $2$ in the last sequence are $n - \nu - 2$ and $\nu$, respectively. Now Lemma \ref{maj1} implies that
$\sum_{v\in\,V(G)\backslash\{v_1\}}\sqrt{(n-1)^2+d_G(v)^2}$ $\leq$  $\left( n-\nu-2 \right) \sqrt { \left( n-1 \right) ^{2}+1}$ $+$ $\nu\sqrt { \left( n-1 \right) ^{2}+4}  +\sqrt { \left( n-1 \right) ^{2}+ \left(\nu+1 \right) ^{2}}$
with equality if and only if $G\cong\,H_{n,\nu}$.
\end{proof}

We are now ready to prove Conjecture \ref{con}.


\begin{thm}\label{thmain1}
Let $G$ be a graph with maximum value of Sombor index among all $n$-vertex graphs with cyclomatic number $\nu$. If
$0\leq\nu\leq\,n-2$ then $G\cong\,H_{n,\nu}$ and $SO(G)$ $=$ $\left( n-\nu-2 \right) \sqrt { \left( n-1 \right) ^{2}+1}+\nu\sqrt { \left( n-1 \right) ^{2}+4} + \sqrt{ \left( n-1 \right) ^{2}+ \left(\nu+1 \right) ^{2}}+\nu\sqrt { \left( \nu+1 \right) ^{2}+4}.$
\end{thm}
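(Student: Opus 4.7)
The plan is to apply Lemma \ref{2lm2} first, which tells us that an extremal graph must have $\Delta(G)=n-1$. Fix a vertex $v_1$ with $d_G(v_1)=n-1$. Since the cyclomatic number is $\nu$, we have $|E(G)|=n-1+\nu$, so exactly $n-1$ edges are incident with $v_1$ and the remaining $\nu$ edges induce the graph $G'=G-v_1$ on $n-1$ vertices. This decomposition splits $SO(G)$ into two sums:
\begin{equation*}
SO(G)=\sum_{v\in V(G)\setminus\{v_1\}}\sqrt{(n-1)^{2}+d_G(v)^{2}}+\sum_{uv\in E(G'),\,u,v\neq v_1}\sqrt{d_G(u)^{2}+d_G(v)^{2}}.
\end{equation*}

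First I would bound the first sum directly by Lemma \ref{2lm3}, which already produces three of the four terms in the claimed expression and is tight exactly for $G\cong H_{n,\nu}$. Next I would handle the second sum by the observation that, for any edge $uv$ not incident with $v_1$, we have $d_G(u)=d_{G'}(u)+1$ and $d_G(v)=d_{G'}(v)+1$, since $v_1$ is adjacent to every other vertex. Therefore the second sum equals $SO^{\ddagger}(G')$, the invariant studied in Lemma \ref{th2}. Applying that lemma to $G'$, which has $n-1$ vertices and $\nu\leq n-2=(n-1)-1$ edges, yields
\begin{equation*}
SO^{\ddagger}(G')\leq\nu\sqrt{(\nu+1)^{2}+4},
\end{equation*}
with equality iff $G'\cong S_{\nu+1}\cup(n-\nu-2)K_1$.

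Adding the two bounds gives the claimed upper bound for $SO(G)$. For the characterization of equality I would note that the first inequality already forces $G\cong H_{n,\nu}$; it then only remains to verify that this graph also achieves equality in the second bound, which it does, because in $H_{n,\nu}$ the non-$v_1$ edges form precisely a star $S_{\nu+1}$ together with $n-\nu-2$ isolated vertices. Conversely, $H_{n,\nu}$ has $\Delta=n-1$, matches the majorization bound of Lemma \ref{2lm1}, and satisfies the equality condition of Lemma \ref{th2}, so a direct computation confirms the stated value of $SO(H_{n,\nu})$.

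The conceptually delicate step is the reformulation of the off-$v_1$ contribution as $SO^{\ddagger}(G')$: one must notice that Lemma \ref{th2} was tailor-made for exactly this auxiliary sum, so that the two independent extremal tools (majorization on the degree sequence in Lemma \ref{2lm3}, and the edge-count bound in Lemma \ref{th2}) combine coherently and share the unique extremizer $H_{n,\nu}$. I do not expect any serious calculation to be needed beyond this; the only remaining care is to check that the range $0\leq\nu\leq n-2$ keeps us inside the hypotheses of all three lemmas, which is immediate.
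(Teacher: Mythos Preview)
Your proposal is correct and follows essentially the same approach as the paper: invoke Lemma~\ref{2lm2} to get $\Delta(G)=n-1$, split $SO(G)$ into the edges incident with the universal vertex and the rest, recognize the latter as $SO^{\ddagger}(G-v_1)$, and then apply Lemmas~\ref{2lm3} and~\ref{th2} respectively. The paper's proof is simply a more compressed version of exactly this argument.
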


\begin{proof} By Lemma \ref{2lm2}, we have $\Delta(G)=n-1$. Suppose $u\in\,V(G)$ and $deg_G(u)=n-1$. By definition of Sombor index, $SO(G)$ $=$ $\sum_{v\in\,V(G)\backslash\{u\}}\sqrt{(n-1)^2+d_G(v)^2}$ $+$ $SO^{\ddagger}(G-u)$. Since $G-u$ is a graph of order $n-1$ with $\nu$ edges, Lemmas \ref{th2} and \ref{2lm3} imply that
$SO(G)$ $\leq$ $\left( n-\nu-2 \right) \sqrt { \left( n-1 \right) ^{2}+1}$ $+$ $\nu\sqrt {
 \left( n-1 \right) ^{2}+4}$ $+$ $\sqrt { \left( n-1 \right) ^{2}+ \left(\nu+1 \right) ^{2}}+\nu\sqrt { \left( \nu+1 \right) ^{2}+4}$, with equality if and only if $G\cong\,H_{n,v}$. This completes the proof of the theorem.
\end{proof}

\begin{lem}\label{th1}
If $G$ is an $n$-vertex graph with exactly $m$ edges, then $SO_{red}(G)\leq\,m(m-1)$ with equality if and only if
$G\cong\,S_{m+1}\cup\,(n-m-1)K_1$.
\end{lem}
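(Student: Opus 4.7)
The plan is to bound $SO_{red}(G)$ edge by edge. The starting point is the elementary counting fact that for any edge $uv$ in a graph with $m$ edges, the number of edges incident with $u$ or $v$ equals $d_G(u)+d_G(v)-1$ and is at most $m$; hence $d_G(u)+d_G(v) \le m+1$. Setting $a=d_G(u)-1$ and $b=d_G(v)-1$ (both nonnegative, since $uv$ is an edge), I would combine this with the inequality $\sqrt{a^2+b^2}\le a+b$ for nonnegative reals — which is immediate by squaring, and may also be viewed as an instance of Lemma \ref{maj1} applied to $(a,b)\preceq(a+b,0)$ with the strictly convex function $x\mapsto x^2$. This yields
\[
\sqrt{(d_G(u)-1)^2+(d_G(v)-1)^2} \;\le\; (d_G(u)-1)+(d_G(v)-1) \;\le\; m-1
\]
for every edge $uv\in E(G)$, and summing over the $m$ edges gives $SO_{red}(G)\le m(m-1)$.

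For the equality characterisation, I would analyse when both of the above inequalities are simultaneously tight on a given edge $uv$. The first requires $(d_G(u)-1)(d_G(v)-1)=0$, so one endpoint must be a leaf; the second requires $d_G(u)+d_G(v)=m+1$, which then forces the other endpoint to have degree $m$. Consequently, in the extremal case every edge of $G$ joins a vertex of degree $1$ to a vertex of degree $m$. Two distinct vertices of degree $m$ would be incident to at least $2m-1$ edges in total (whether adjacent or not), exceeding $m$ whenever $m\ge 2$; so for $m\ge 2$ a single hub $v^{\ast}$ of degree $m$ must be incident to every edge, and $G$ is the star $S_{m+1}$ centred at $v^{\ast}$ together with $n-m-1$ isolated vertices. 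The trivial cases $m\in\{0,1\}$ are handled by direct inspection (in both cases $m(m-1)=0$ and the stated graph is the unique candidate).

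I expect the principal subtlety to be the uniqueness argument in the equality case: the per-edge condition only imposes a degree-$m$ endpoint on each edge individually, and one still has to glue these local constraints into a single global centre. The simple edge-count comparison above resolves this cleanly, but it is the one non-mechanical step of the argument; everything else is just the edge-wise inequality followed by summation.
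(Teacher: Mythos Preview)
Your proof is correct and follows essentially the same approach as the paper: bound each edge contribution by $m-1$ via the observation $d_G(u)+d_G(v)\le m+1$, then sum. The only difference is cosmetic --- the paper reaches the per-edge bound by applying Lemma~\ref{maj1} to the convex function $x\mapsto (x-1)^2$ and the majorization $(a,b)\prec(a+1,b-1)$ to push $(d_G(u),d_G(v))$ down to $(m,1)$, whereas you use the equivalent but more elementary inequality $\sqrt{a^2+b^2}\le a+b$ directly; your equality analysis is also spelled out in more detail than the paper's.
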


\begin{proof}
Since $G$ is a graph with $m$ edges, for every $uv\in\,E(G)$, we have $deg_G(u)+deg_G(v)\leq\,m+1$. On the other hand, the function
$f(x)=(x-1)^2$ is strictly convex on $(-\infty,\infty)$, and for any integers $a,\,b$, $a\geq\,b\geq2$, $(a,b)\prec\,(a+1,\,b-1)$. Apply Lemma \ref{maj1} to show that $\sqrt{(a-1)^2+(b-1)^2}$ $<$ $\sqrt{a^2+(b-2)^2}$. This proves that $SO_{red}(G)=\sum_{uv\in\,E(G)}\sqrt{(deg_G(u)-1)^2+(deg_G(v)-1)^2}$ $\leq$ $\sum_{uv\in\,E(G)}\sqrt{(m-1)^2+(1-1)^2}$ $=$ $m(m-1)$ with equality if and only if $G\cong (n-m-1)K_1\cup S_{m+1}$, proving the lemma.
\end{proof}

\begin{cor}\label{cor}
If $T$ is an $n$-vertex tree, then $SO_{red}(T)\leq(n-1)(n-2)$ with equality if and only if $T\cong\,S_{n}$.
\end{cor}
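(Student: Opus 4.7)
The plan is to derive this as an immediate consequence of Lemma \ref{th1}. Since every $n$-vertex tree $T$ is connected and acyclic, it has exactly $m=n-1$ edges. First I would simply substitute $m=n-1$ into the bound provided by Lemma \ref{th1}, obtaining
\[
SO_{red}(T)\;\leq\; m(m-1)\;=\;(n-1)(n-2),
\]
which gives the stated inequality without any further calculation.

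For the equality case, I would invoke the characterization in Lemma \ref{th1}, which says equality holds if and only if the graph is isomorphic to $S_{m+1}\cup(n-m-1)K_1$. Specialising to $m=n-1$, the isolated part becomes $(n-(n-1)-1)K_1 = 0\cdot K_1$, so the extremal graph collapses to $S_{m+1}=S_n$. Since $S_n$ is itself a tree on $n$ vertices, it lies in the class we are optimising over, and conversely any tree attaining the bound must be isomorphic to it. This gives the ``if and only if'' clause.

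There is no real obstacle here: the entire argument is a direct substitution into the preceding lemma, and the only thing to verify is that the would-be isolated vertices from Lemma \ref{th1} vanish precisely when $m=n-1$, which is automatic for trees. Consequently the proof can be presented in just a couple of lines, with no additional machinery needed beyond Lemma \ref{th1}.
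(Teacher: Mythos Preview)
Your proposal is correct and matches the paper's intent exactly: the corollary is stated immediately after Lemma~\ref{th1} with no separate proof, so the intended argument is precisely the direct substitution $m=n-1$ together with the observation that $S_{m+1}\cup(n-m-1)K_1$ reduces to $S_n$.
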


We are now ready to prove the similar form of Conjecture \ref{con}, for the reduced Sombor index.

\begin{thm}
Suppose $G$ has maximum reduced Sombor index among all $n$-vertex graphs with cyclomatic number $\nu$. If
$0\leq\nu\leq\,n-2$, then $G\cong\,H_{n,\nu}$ and $SO_{red}(G)$ $=$ $(n - v - 2)(n - 2)$ $+$ $v\sqrt { \left( n-2
 \right) ^{2}+1}+v\sqrt {{v}^{2}+1}$ $+$ $\sqrt { \left( n-2 \right) ^{2}+{v}^{2}}$.
\end{thm}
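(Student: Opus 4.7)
The plan is to mirror the proof of Theorem~\ref{thmain1}, replacing each ingredient by its reduced-Sombor analogue. First I would argue, by an argument parallel to Lemma~\ref{2lm2}, that the extremal graph $G$ must satisfy $\Delta(G)=n-1$. The graph surgery used by R\'eti et al.\ only relies on the fact that each edge contribution $\sqrt{(a-1)^2+(b-1)^2}$ is strictly increasing in each of $a,b$ (for $a,b\geq 1$), so the same transformation increases $SO_{red}$ whenever $\Delta(G)<n-1$, producing the required vertex $u$ of degree $n-1$.

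Fix such a $u$. Because every other vertex is adjacent to $u$, one has $d_{G-u}(v)=d_G(v)-1$ for all $v\neq u$, and the definition of $SO_{red}$ splits as
\begin{align*}
SO_{red}(G)\,=\,\sum_{v\in V(G)\setminus\{u\}}\!\!\sqrt{(n-2)^2+(d_G(v)-1)^2}\;+\;SO(G-u),
\end{align*}
where $G-u$ has $n-1$ vertices and exactly $\nu$ edges. To handle the second summand I would prove a companion to Lemma~\ref{th2} for $SO$ itself: for any graph $H$ with $m$ edges, $SO(H)\leq m\sqrt{m^2+1}$, with equality (when $m\leq |V(H)|-1$) iff $H\cong S_{m+1}\cup(|V(H)|-m-1)K_1$. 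The proof copies that of Lemma~\ref{th2} with $(x+1)^2$ replaced by $x^2$: every edge satisfies $d_H(w)+d_H(x)\leq m+1$, and strict convexity of $x^2$ together with Lemma~\ref{maj1} gives $\sqrt{d_H(w)^2+d_H(x)^2}\leq\sqrt{m^2+1}$; summing over the $m$ edges yields the bound. Applying this to $G-u$ produces $SO(G-u)\leq \nu\sqrt{\nu^2+1}$.

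For the first summand, Lemma~\ref{2lm1} tells me that $(d_G(v_2)-1,\ldots,d_G(v_n)-1)$ is majorized by the shifted sequence $(\nu,1,\ldots,1,0,\ldots,0)$ with $\nu$ ones and $n-\nu-2$ zeros (subtracting a constant from every coordinate preserves majorization). The function $h(x)=\sqrt{(n-2)^2+x^2}$ satisfies $h''(x)=(n-2)^2/((n-2)^2+x^2)^{3/2}>0$, so it is strictly convex on $\mathbb{R}$, and Lemma~\ref{maj1} supplies
\begin{align*}
\sum_{v\neq u}\sqrt{(n-2)^2+(d_G(v)-1)^2}\,\leq\, (n-\nu-2)(n-2)+\nu\sqrt{(n-2)^2+1}+\sqrt{(n-2)^2+\nu^2}.
\end{align*}
Adding the two bounds yields the claimed value of $SO_{red}(G)$, and simultaneous equality forces $\Delta(G)=n-1$, the non-increasing degree sequence to be $(n-1,\nu+1,2,\ldots,2,1,\ldots,1)$, and $G-u\cong S_{\nu+1}\cup(n-\nu-2)K_1$; these conditions together pin $G$ down uniquely as $H_{n,\nu}$. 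A direct edge-by-edge computation over the four edge-types of $H_{n,\nu}$ (pendent edges at the centre, centre-to-middle, centre-to-hub, and hub-to-middle) confirms the closed formula.

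The most delicate point is the first step: verifying that extremality with respect to $SO_{red}$ really does force $\Delta(G)=n-1$. Should the proof of Lemma~\ref{2lm2} not transfer transparently, an alternative is to carry out the upper bounds above unconditionally (letting $u$ be any maximum-degree vertex) and then observe that $H_{n,\nu}$ itself attains the right-hand side for a graph with $\Delta=n-1$, so the global maximum must indeed be realized there.
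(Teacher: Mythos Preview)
Your proof is correct and follows exactly the template the paper sketches: obtain $\Delta(G)=n-1$ by the reduced-Sombor analogue of Lemma~\ref{2lm2}, split off the universal vertex $u$, and bound the two resulting sums by majorization. Two minor remarks: (i) you correctly identify the second summand as $SO(G-u)$ and prove the companion bound $SO(H)\le m\sqrt{m^2+1}$, whereas the paper's one-line proof nominally points to Lemma~\ref{th1} (a bound on $SO_{red}$); your formulation is in fact the precise one needed here. (ii) Your proposed fallback in the final paragraph does not work as written, because the very decomposition $SO_{red}(G)=\sum_{v\ne u}\sqrt{(n-2)^2+(d_G(v)-1)^2}+SO(G-u)$ already requires $u$ to be adjacent to every other vertex; fortunately the main route via the Lemma~\ref{2lm2} analogue goes through, so the fallback is not needed.
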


\begin{proof}
The proof follows from Lemma \ref{th1} and a similar argument as  Theorem \ref{thmain1}.
\end{proof}

The following lemma is useful in finding some new lower bounds for the Sombor and reduced Sombor indices of graphs.

\begin{lem} {\rm (\cite{6})} \label{lm1}
If $G$ is a graph with $n$ vertices, $m$ edges, and without isolated edges, i.e. $\varepsilon_0(G)=0$, then
$\varepsilon_{1}(G)$ $=$ $4m-M_1(G)$ $+$ $\sum_{i=3}^{2n-4}\varepsilon_i(G)(i-2)$ and
$\varepsilon_{2}(G)$ $=$ $M_1(G)$ $-$ $3m$ $-$ $\sum_{i=3}^{2n-4}\varepsilon_i(G)(i-1)$.
\end{lem}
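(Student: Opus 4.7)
The plan is to derive both identities from two elementary global counts on the edges of $G$ and then solve the resulting $2\times 2$ linear system in the unknowns $\varepsilon_1(G)$ and $\varepsilon_2(G)$.

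First, since $\varepsilon_0(G)=0$ and since $\Delta(G)\leq n-1$ forces every edge-degree to lie in $\{1,2,\dots,2n-4\}$, I would record the total edge count in the form
$$\varepsilon_1(G) + \varepsilon_2(G) + \sum_{i=3}^{2n-4} \varepsilon_i(G) \;=\; m. \qquad (\ast)$$
Next, I would use the fact that the edge-degree of an edge $uv\in E(G)$ equals $d_G(u)+d_G(v)-2$, because in the line graph $L(G)$ the edge $uv$ is adjacent to precisely the other edges of $G$ incident with $u$ or with $v$. Weighting each edge of $G$ by its edge-degree and summing gives
$$\varepsilon_1(G) + 2\varepsilon_2(G) + \sum_{i=3}^{2n-4} i\,\varepsilon_i(G) \;=\; \sum_{uv\in E(G)}\bigl(d_G(u)+d_G(v)-2\bigr) \;=\; M_1(G)-2m. \qquad (\ast\ast)$$

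Subtracting $(\ast)$ from $(\ast\ast)$ eliminates $\varepsilon_1(G)$ and yields
$$\varepsilon_2(G) \;=\; M_1(G)-3m - \sum_{i=3}^{2n-4}(i-1)\,\varepsilon_i(G),$$
which is the second claimed identity. Substituting this expression back into $(\ast)$ and simplifying gives
$$\varepsilon_1(G) \;=\; m-\varepsilon_2(G)-\sum_{i=3}^{2n-4}\varepsilon_i(G) \;=\; 4m - M_1(G) + \sum_{i=3}^{2n-4}(i-2)\,\varepsilon_i(G),$$
which is the first.

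There is no substantive obstacle: the argument is purely combinatorial bookkeeping. The only points to watch are the translation of the edge-degree of $uv$ into $d_G(u)+d_G(v)-2$ (which uses the simplicity of $G$ to avoid any double-counting), and the observation that the hypothesis $\varepsilon_0(G)=0$ is exactly what lets the index in $(\ast)$ and $(\ast\ast)$ start at $i=1$, so that the elimination between the $\varepsilon_1$ and $\varepsilon_2$ rows is legitimate.
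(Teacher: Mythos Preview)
Your argument is correct: the two counting identities $(\ast)$ and $(\ast\ast)$ are exactly the right ingredients, and the linear elimination is carried out without error. Note, however, that the paper does not supply its own proof of this lemma at all---it merely quotes the result from \cite{6}---so there is no in-paper argument to compare against; your proof is the natural elementary one and almost certainly coincides with what appears in the cited source.
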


Suppose  $x$ and $y$ are two positive real numbers and  $x+y-2=s$. Since ${x^2+y^2}-\frac{(s+2)^2}{2}=x^2+y^2-\frac{(x+y)^2}{2}=\frac{1}{2}(x-y)^2$, ${x^2+y^2}\geq\frac{(s+2)^2}{2}$, and the equality holds if and only if $x=y$. So, $\sqrt{x^2+y^2}\geq\,\frac{\sqrt{2}(s+2)}{2}$, and the equality holds if and only if $x=y$.

\begin{thm}
If $G$ is a graph with $n$ vertices, $m$ edges, and without isolated edges, i.e. $\varepsilon_0(G)=0$, then
$SO(G)$ $\geq$ $\frac{1}{3} \left(2\,\sqrt {2}- \sqrt {5} \right)  \left( 3\,{\it M_1(G)}-4\,m+2\,\sqrt {5}\sqrt {2}m \right)$ and $SO_{red}(G)$ $\geq$ $\left( \sqrt {2}-1 \right)  \left( {\it M_1(G)}-2\,m+\sqrt {2}m \right)$.
The equalities hold if and only if $G\cong P_n$ or $G\cong C_n$.
\end{thm}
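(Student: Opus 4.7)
The plan is to apply the sharp inequality $\sqrt{x^2+y^2}\geq (x+y)/\sqrt{2}$ (equality iff $x=y$), which is exactly the one established in the paragraph immediately preceding the theorem with $s=x+y-2$, edge-by-edge, and then use Lemma \ref{lm1} to eliminate $\varepsilon_1(G)$ and $\varepsilon_2(G)$ in favour of $M_1(G)$, $m$, and the higher $\varepsilon_i(G)$.

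For the Sombor bound I would first partition $E(G)$ by edge-degree $i=d_G(u)+d_G(v)-2$. Since $\varepsilon_0(G)=0$, every edge of degree $1$ satisfies $\{d_G(u),d_G(v)\}=\{1,2\}$ and contributes exactly $\sqrt{5}$; every edge of degree $2$ contributes at least $2\sqrt{2}$ (equality iff both endpoints have degree $2$); every edge of degree $i\geq 3$ contributes at least $(i+2)/\sqrt{2}$ by the displayed inequality. This yields
$$SO(G)\ \geq\ \sqrt{5}\,\varepsilon_1(G)+2\sqrt{2}\,\varepsilon_2(G)+\frac{1}{\sqrt{2}}\sum_{i=3}^{2n-4}(i+2)\,\varepsilon_i(G).$$
Substituting the identities for $\varepsilon_1(G)$ and $\varepsilon_2(G)$ from Lemma \ref{lm1} produces the leading part $(2\sqrt{2}-\sqrt{5})M_1(G)+(4\sqrt{5}-6\sqrt{2})m$ plus a residual sum whose coefficient is $c_i:=\sqrt{5}(i-2)-2\sqrt{2}(i-1)+(i+2)/\sqrt{2}$. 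A short computation shows $c_i$ is affine and strictly increasing in $i$, with $c_3=\sqrt{5}-\tfrac{3\sqrt{2}}{2}>0$, so the residual is nonnegative. Factoring $\tfrac{1}{3}(2\sqrt{2}-\sqrt{5})$ out of the leading part delivers the stated bound for $SO(G)$.

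The reduced Sombor bound follows the same template, with $x=d_G(u)-1$ and $y=d_G(v)-1$: edges of degree $1,\,2,\,i\geq 3$ contribute at least $1,\ \sqrt{2},\ i/\sqrt{2}$, respectively. Substituting Lemma \ref{lm1} leaves a residual whose coefficient $d_i=(i-2)-\sqrt{2}(i-1)+i/\sqrt{2}=(1-\tfrac{\sqrt{2}}{2})i+(\sqrt{2}-2)$ is again affine and increasing in $i$ with $d_3=1-\tfrac{\sqrt{2}}{2}>0$. Discarding it yields
$$SO_{red}(G)\ \geq\ (\sqrt{2}-1)M_1(G)+(4-3\sqrt{2})m\ =\ (\sqrt{2}-1)\bigl(M_1(G)-2m+\sqrt{2}m\bigr),$$
as claimed.

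For the equality analysis, strict positivity of $c_i$ and $d_i$ at every $i\geq 3$ forces $\varepsilon_i(G)=0$ for all such $i$, i.e.\ $\Delta(G)\leq 2$; and the edge-of-degree-$2$ estimate is tight only when both endpoints have degree exactly $2$. Together with $\varepsilon_0(G)=0$ (which kills any component of size $\leq 2$), these constraints force $G$ (assumed connected as in the statement) to be either $P_n$ or $C_n$, and conversely a direct substitution confirms equality for both graphs. The main obstacle is verifying the strict positivity of the residual coefficients $c_i$ and $d_i$ at $i=3$ (and hence at all $i\geq 3$ by monotonicity); the remaining work is routine algebraic bookkeeping between the edge-degree sum and Lemma \ref{lm1}, together with the compatibility check that the tightness conditions coming from the $\varepsilon_1,\varepsilon_2$ terms and from $\varepsilon_i=0$ $(i\geq 3)$ single out precisely the paths and cycles.
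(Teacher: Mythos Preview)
Your proposal is correct and follows essentially the same route as the paper: bound each edge's contribution below via the $\sqrt{x^2+y^2}\ge (x+y)/\sqrt2$ inequality (with the exact value $\sqrt5$ on edge-degree~$1$ and the tight value $2\sqrt2$ on edge-degree~$2$), then substitute the identities of Lemma~\ref{lm1} for $\varepsilon_1,\varepsilon_2$ and discard a nonnegative residual. The only cosmetic difference is that the paper simplifies the residual coefficient to the closed form $\tfrac12(2\sqrt5-3\sqrt2)(i-2)$ (respectively $\tfrac12(2-\sqrt2)(i-2)$), which makes its nonnegativity immediate, whereas you argue it via affinity and positivity at $i=3$; both are fine.
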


\begin{proof}
By definition of Sombor index, 
\begin{align*}
SO(G)&=\sum_{1\leq\,i\leq\,j\leq\,n-1}m_{i,j}(G)\sqrt{i^2+j^2}\\
&=m_{1,2}\sqrt{5}+m_{1,3}\sqrt{10}+m_{2,2}\sqrt{8}+\sum_{j=4}^{n-1}m_{1,j}\sqrt{1+j^2}+\sum_{j=3}^{n-1}m_{2,j}\sqrt{4+j^2}\\
&~~~~~~~~~~~~~~~~~~~~~~~~~~~~~~~~~~~~~~~~~~~~~+\sum_{3\leq i\leq j\leq n-1}m_{i,j}\sqrt{i^2+j^2},
\end{align*}
and by our discussion before the statement of this theorem,
\begin{align*}
SO(G)&\geq m_{1,2}\sqrt{5}+m_{1,3}\sqrt{10}+m_{2,2}\sqrt{8}+\sum_{i=3}^{2n-4}\varepsilon_i(G)\frac{\sqrt{2}(i+2)}{2}\\
&\geq m_{1,2}\sqrt{5}+ \big(m_{1,3}+m_{2,2}\big)\sqrt{8}+\sum_{i=3}^{2n-4}\varepsilon_i(G)\frac{\sqrt{2}(i+2)}{2}\\
&=\varepsilon_1(G)\sqrt{5}+\varepsilon_2(G)\sqrt{8}+\sum_{i=3}^{2n-4}\varepsilon_i(G)\frac{\sqrt{2}(i+2)}{2}.
\end{align*}
 Now, by Lemma \ref{lm1},
{\small\begin{align*}
SO(G)&\geq \Big[4m-M_1(G) + \sum_{i=3}^{2n-4}\varepsilon_i(G)(i-2)  \Big]\sqrt{5} +\sum_{i=3}^{2n-4}\varepsilon_i(G)\frac{\sqrt{2}(i+2)}{2}\\
&+\Big[M_1(G) -3m-\sum_{i=3}^{2n-4}\varepsilon_i(G)(i-1) \Big]\sqrt{8}\\
&\geq\Big[4m-M_1(G) \Big]\sqrt{5}+\Big[M_1(G) -3m \Big]\sqrt{8}
+\frac{1}{2}\sum_{i=3}^{2n-4}\varepsilon_i(G)\big(2\sqrt{5}-3\sqrt{2} \Big)\big(i-2 \big)\\
&\geq\frac{1}{3} \left(2\,\sqrt {2}- \sqrt {5} \right)  \left( 3\,{\it M_1(G)}-4\,m+2\,\sqrt {5}\sqrt {2}m \right).
\end{align*}}
The equality holds if and only if $G\cong P_n$ or $G\cong C_n$. By definition of reduced  Sombor index,
\begin{align*}
SO_{red}(G)&=\sum_{1\leq\,i\leq\,j\leq\,n-1}m_{i,j}(G)\sqrt{(i-1)^2+(j-1)^2}\\
&=m_{1,2}+2m_{1,3}+\sqrt{2}m_{2,2}+\sum_{j=4}^{n-1}m_{1,j}\sqrt{(j-1)^2}+\sum_{j=3}^{n-1}m_{2,j}\sqrt{1+(j-1)^2}\\
&~~~~~~~~~~~~~~~~~~~~~~~~~~~~~~~~~+\sum_{3\leq i\leq j\leq n-1}m_{i,j}\sqrt{(i-1)^2+(j-1)^2}.
\end{align*}
Again by our discussion before the statement of this theorem,
\begin{align*}
SO_{red}(G)&\geq m_{1,2}+2m_{1,3}+\sqrt{2}m_{2,2}+\sum_{i=3}^{2n-4}\varepsilon_i(G)\frac{\sqrt{2}i}{2}\\
&\geq m_{1,2}+\sqrt{2}\big(m_{1,3}+m_{2,2}\big)+\sum_{i=3}^{2n-4}\varepsilon_i(G)\frac{\sqrt{2}i}{2}\\
&=\varepsilon_1(G)+\sqrt{2}\varepsilon_2(G)+\sum_{i=3}^{2n-4}\varepsilon_i(G)\frac{\sqrt{2}i}{2}.
\end{align*}
Now by Lemma \ref{lm1},
{\small \begin{align*}
SO_{red}(G)&\geq \Big[4m-M_1(G) + \sum_{i=3}^{2n-4}\varepsilon_i(G)(i-2)  \Big] +\sum_{i=3}^{2n-4}\varepsilon_i(G)\frac{\sqrt{2}i}{2}\\
&~~~~~~~~~~~~~~~~~~~~~~~~~~+\sqrt{2}\Big[M_1(G) -3m-\sum_{i=3}^{2n-4}\varepsilon_i(G)(i-1) \Big]\\
&\geq\Big[4m-M_1(G) \Big]+\sqrt{2}\Big[M_1(G) -3m \Big]
+\frac{1}{2}\sum_{i=3}^{2n-4}\varepsilon_i(G)\big(2-\sqrt{2} \Big)\big(i-2 \big)\\
&\geq\,\left( \sqrt {2}-1 \right)  \left( {\it M_1(G)}-2\,m+\sqrt {2}m \right).
\end{align*}}
The equality holds if and only if $G\cong P_n$ or $G\cong C_n$.
\end{proof}

\vskip 3mm

\noindent\textbf{Acknowledgements.}   The research of the second and third authors are partially supported by the University of Kashan under grant number 890190/221.

\vskip 0.4 true cm


\end{document}